\documentclass[11pt]{article}

\usepackage[margin=1in]{geometry}
\usepackage{amsmath,amssymb,amsthm,mathtools}
\usepackage{enumitem}
\usepackage[colorlinks=true,linkcolor=blue,urlcolor=blue]{hyperref}
\usepackage{xcolor}

\newtheorem{theorem}{Theorem}[section]

\newtheorem{proposition}[theorem]{Proposition}
\theoremstyle{definition}
\newtheorem{definition}[theorem]{Definition}
\newtheorem{claim}{Claim}[section]

\newcommand{\cP}{\mathcal{P}}
\newcommand{\cQ}{\mathcal{Q}}

\title{Cayley Tournaments Simultaneously Critical for the Clique
and Dichromatic Numbers}
\author{Guantao Chen, Shengze Wang}
\date{}

\begin{document}

\maketitle

\begin{abstract}
For a tournament $T$, let $\omega(T)$ be the minimum clique number among
the backedge graphs of $T$, and let $\chi(T)$ be its dichromatic number.
We give a template-lifting construction. It turns a $k$-template into a
regular, vertex-transitive Cayley tournament that is
simultaneously $(k+1)$-$\omega$-critical and
$(k+1)$-$\chi$-critical. The output is also a $(k+1)$-template.
Iterating the construction, we prove that
for every $k\geq3$, there is a positive even integer $m_k$ with the following
property. Every $N>1$ with $N\equiv1\pmod{m_k}$ is the order of a
regular, vertex-transitive Cayley tournament that is
simultaneously $k$-$\omega$-critical and $k$-$\chi$-critical. This
proves a conjecture of Aboulker,
Aubian, Charbit, and Lopes and gives a negative answer to their
bounded-certificate question when the hypothesis is $\omega(T)\geq k$.
We also find the clique number of a cyclic substitution when
each block satisfies $\omega=\chi$. We then describe exactly when this
substitution is $\omega$-critical if the blocks are $\chi$-critical and
satisfy $\omega=\chi$.
\end{abstract}

\medskip
\noindent\textbf{Keywords.}
Tournament; backedge graph; clique number; dichromatic number;
critical tournament; circulant tournament; substitution.

\smallskip
\noindent\textbf{2020 Mathematics Subject Classification.}
05C20, 05C15.

\section{Introduction}\label{sec:introduction}

A tournament is an orientation of a complete graph. Let $T$ be a
tournament and let $\sigma$ be a linear order of $V(T)$. The
\emph{backedge graph} $B_\sigma(T)$ is the graph on $V(T)$ in which
$uv$ is an edge precisely when $v \to u$ in $T$ and $u\prec_\sigma v$.  Following
Kim~\cite{kim} and Aboulker, Aubian, Charbit, and Lopes~\cite{aboulker},
define the \emph{clique number} of $T$ by
\[
    \omega(T):=\min_\sigma \omega(B_\sigma(T)).
\]
The \emph{dichromatic number} $\chi(T)$ is the minimum number of
transitive subtournaments whose vertex sets partition $V(T)$.
Equivalently,
\[  
    \chi(T)=\min_{\sigma}\chi\bigl(B_{\sigma}(T)\bigr),
\]
where the minimum is taken over all linear orders $\sigma$ of $V(T)$.
Indeed, suppose that
\(
    V(T)=X_1\mathbin{\dot\cup}\cdots\mathbin{\dot\cup}X_k,
\)
where each $T[X_i]$ is transitive. Order each $X_i$ transitively and
concatenate these orders to obtain a linear order $\sigma$ of $V(T)$.
Then each $X_i$ is an independent set in $B_{\sigma}(T)$, and hence
$\chi(B_{\sigma}(T))\leq k$. Conversely, for any linear order $\sigma$
of $V(T)$, every independent set in $B_{\sigma}(T)$ induces a
transitive subtournament of $T$, with the restriction of $\sigma$ as a
transitive order. Thus every proper coloring of $B_{\sigma}(T)$ yields
a partition of $V(T)$ into transitive subtournaments, proving the
equivalence.

By definition, $\omega(T)\leq \chi(T)$. This inequality can be
arbitrarily far from equality: an observation of Nguyen, Scott, and
Seymour~\cite{nguyen-scott-seymour} implies that, for every $q\geq 1$,
there is a tournament $T$ with $\omega(T)\leq 2$ and $\chi(T)\geq q$.
For $q\geq 2$, such a tournament necessarily satisfies $\omega(T)=2$,
since $\omega(T)=1$ if and only if $T$ is transitive.

For $\theta\in\{\omega,\chi\}$, a tournament $T$ is
\emph{$\theta$-critical} if $\theta(T-v)<\theta(T)$ for every
$v\in V(T)$. It is \emph{$k$-$\theta$-critical} if
$\theta(T)=k$ and $\theta(T-v)=k-1$ for every $v\in V(T)$. Since
deleting one vertex changes either parameter by at most one, every $\theta$-critical tournament with
$\theta(T)=k$ is $k$-$\theta$-critical. For either parameter, the
unique $1$-critical tournament is the one-vertex tournament, and the
unique $2$-critical tournament is the cyclic triangle. Thus the first
nontrivial existence problem begins with $k=3$.

The corresponding existence problem for the dichromatic number is
classical. Neumann-Lara and Urrutia constructed infinite families of
regular $k$-$\chi$-critical tournaments for every $k\geq 3$ except
$k=4$, and Neumann-Lara subsequently settled the remaining case
\cite{neumann-urrutia,neumann1997}. Neumann-Lara also constructed
infinite families of $k$-$\chi$-critical circulant tournaments for every
$k\geq 3$ except $k=7$, and Araujo-Pardo and Olsen completed the case
$k=7$~\cite{neumann2000,araujo-olsen}. Hence the existence problem for
$\chi$ is settled even within the class of circulant tournaments. These
constructions, however, do not control the clique numbers of their
backedge graphs.

The parameter $\omega$ appears in Kim's Ph.D. dissertation~\cite{kim} and was
studied systematically by Aboulker et al.~\cite{aboulker}; see also
Nguyen, Scott, and Seymour~\cite{nguyen-scott-seymour}. Aboulker, Aubian, Charbit, and Lopes~\cite[Question~5.9]{aboulker} asked whether there is a function
$\ell$ such that every tournament $T$ with $\omega(T)\geq k$ contains a
subtournament $A$ satisfying
\[
    |V(A)|\leq \ell(k)
    \qquad\text{and}\qquad
    \omega(A)\geq k,
\]
equivalently, for every fixed integer $k\geq1$, there are only
finitely many pairwise nonisomorphic $k$-$\omega$-critical tournaments. In contrast, they conjectured that, for every $k\geq 3$, there are infinitely
many pairwise nonisomorphic $k$-$\omega$-critical tournaments
\cite[Conjecture~5.10]{aboulker}.  Crew, Fan, Koerts, Moore, and Spirkl~\cite{crew} proved a
threshold version of the bounded-certificate statement: there are
functions $h$ and $\ell$ such that every tournament $T$ with
$\omega(T)\geq h(k)$ contains a subtournament $A$ with
$|V(A)|\leq \ell(k)$ and $\omega(A)\geq k$. In unpublished work, Aubian and Coulomb~\cite{aubian-coulomb}
proved the conjecture for $k=3,4$.
Lelarge~\cite{lelarge-k5} proved the case $k=5$ with the help
of a large language model. In this paper, we confirm the conjecture. Our first main theorem proves it directly for every integer \(k\geq 3\) and gives a stronger result. Our second main theorem generalizes the cyclic-substitution construction of Aubian and Coulomb. It gives bounds on the clique number of a cyclic substitution and sufficient conditions under which the resulting tournament is \(\omega\)-critical.

To state the first theorem, let $N$ be odd.
A set $D\subseteq \mathbb Z_N\setminus\{0\}$ is called a
\emph{maximal antipodal-free set} if it contains exactly one element of each
pair $\{d,-d\}$ with $d\in\mathbb Z_N\setminus\{0\}$. Given such a set
$D$, the \emph{Cayley tournament}
$\operatorname{Cay}(\mathbb Z_N,D)$ is the tournament with vertex set
$\mathbb Z_N$ in which
\(
    i\to j
    \quad\Longleftrightarrow\quad
    i-j\in D.
\)
We also call such a tournament a \emph{circulant tournament}. Every
translation $x\mapsto x+a$ of $\mathbb Z_N$ is an automorphism of
$\operatorname{Cay}(\mathbb Z_N,D)$, so the tournament is
vertex-transitive. Moreover, $|D|=(N-1)/2$, and hence every vertex has
outdegree $(N-1)/2$; in particular, the tournament is regular.

\begin{theorem}\label{thm:main}
For every integer $k\geq 3$, there is a positive even integer $m_k$ such that,
for every integer $N>1$ with $N\equiv 1\pmod{m_k}$, there is a regular,
vertex-transitive Cayley tournament of order $N$ that is
simultaneously $k$-$\omega$-critical and $k$-$\chi$-critical. Consequently, for every $k\geq 3$, there are infinitely many pairwise
nonisomorphic $k$-$\omega$-critical tournaments.
\end{theorem}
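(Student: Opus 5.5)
The plan is to prove Theorem~\ref{thm:main} by induction on $k$, using the template-lifting construction from the abstract. First I will fix a notion of \emph{$k$-template}: a Cayley tournament $\operatorname{Cay}(\mathbb Z_m,D)$ (or a family of them indexed by $N\equiv1\pmod{m_k}$) that is simultaneously $k$-$\omega$-critical and $k$-$\chi$-critical, satisfies $\omega=\chi=k$, and carries enough extra structure (for instance a distinguished coset partition or arithmetic-progression decomposition of $\mathbb Z_N$) to be fed back into the construction.

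\textbf{Base case.} For $k=3$ I will exhibit an explicit template. A natural candidate is a small circulant $3$-$\chi$-critical tournament, such as the Paley tournament on $7$ vertices. I will verify directly that its backedge graphs always contain a triangle while every vertex-deleted subtournament is $2$-colourable. I will then extend this to all $N\equiv1\pmod{m_3}$ by a blow-up along the residue classes modulo $m_3$.

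\textbf{Lifting step.} Given a $k$-template on $\mathbb Z_N$, I will build a $(k+1)$-template as a Cayley tournament on a larger cyclic group. The idea is a cyclic substitution realized arithmetically: write the new group as $\mathbb Z_{M}$ with blocks given by cosets of a subgroup. Inside each block I place the $k$-template, and I orient the edges between blocks according to a cyclic triangle or a suitable circulant pattern on the block index. Choosing the connection set as $D'=\{\text{lifted } D\}\cup\{\text{between-block differences}\}$ keeps the result a Cayley tournament, hence regular and vertex-transitive. The modulus condition $N\equiv1\pmod{m_{k+1}}$ with $m_{k+1}$ even should come from requiring $N-1$ to be divisible by the block structure and from the need for an antipodal-free choice on each difference class.

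For the upper bounds, $\chi\le k+1$ follows by colouring each block with $k$ colours in a way compatible across blocks. Vertex-transitivity reduces criticality to deleting a single vertex $0$. After that deletion, one explicit partition into $k$ transitive sets is needed. For $\chi$ this exploits the criticality of the inner template in the block containing $0$. Since $\omega\le\chi$, the inequality $\omega(T-0)\le k$ follows at once.

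\textbf{Main obstacle.} The hard part will be the lower bound $\omega(T)\ge k+1$: for every linear order $\sigma$, the backedge graph $B_\sigma(T)$ must contain a $(k+1)$-clique. This gives $\chi\ge k+1$ for free. My approach will be to take an order $\sigma$ and look at the block whose first-appearing vertex (or whose earliest ``completion'') comes last. By the inductive hypothesis that block contains a $k$-clique in the restricted backedge graph. I will then find one more vertex, from another block, that beats every member of that clique while preceding them in $\sigma$. This requires the between-block pattern to be chosen so that some block dominates another entirely and appears early enough in $\sigma$. Making such a pigeonhole argument work uniformly is what forces the specific arithmetic of $m_{k+1}$. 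If a direct extremal choice fails, I will instead use a counting or averaging argument over translates of the $k$-clique.

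Finally, for the \emph{consequently} part, the orders $N\equiv1\pmod{m_k}$ form an infinite set, and tournaments of different orders are nonisomorphic.
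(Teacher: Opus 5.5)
\textbf{The lifting step fails on the $\chi$ side.} Suppose the blocks are cosets of an index-$r$ subgroup and the arcs between two blocks depend only on the blocks (a cyclic triangle, or any circulant pattern on the block index). Then your tournament is the substitution $S[T]$ of copies of $T$ into a regular tournament $S=\operatorname{Cay}(\mathbb Z_r,D_S)$. A transitive set of $S[T]$ meets each block in a transitive set. The blocks it meets induce a transitive subtournament of $S$, and since $S$ is regular this has at most $(r+1)/2$ vertices. Each block needs at least $\chi(T)=k$ classes, so a partition into $p$ transitive sets satisfies $p(r+1)/2\ge rk$, that is,
\[
\chi(S[T])\ \ge\ \frac{2rk}{r+1}\ \ge\ \frac{3k}{2}\ >\ k+1\qquad(r\ge 3,\ k\ge 3).
\]
For the cyclic triangle this reads $\chi(\Delta(T,T,T))\ge\lceil 3k/2\rceil$. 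Consequences:
\begin{itemize}
\item No ``compatible'' $(k+1)$-colouring exists.
\item The same count gives $\chi(S[T]-v)\ge(3k-1)/2>k$, so your vertex-deleted bound fails too.
\item The output does not satisfy $\omega=\chi$, so your induction breaks already at $3\to4$. This is why substitution, as in Theorem~\ref{thm:substitution-main}, only gives $\omega$-criticality and cannot be iterated.
\end{itemize}
Coset blocks also force $r\mid N$, which cannot hold for all $N\equiv1\pmod m$: if $r\mid m+1$ and $r\mid 2m+1$, then $r=1$. The same divisibility obstruction, with $7$ in place of $r$, hits any equal-block blow-up of $QR_7$ in your base case. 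Transitive blocks of size at least $2$ would also destroy criticality. So the base case only justifies the single tournament $QR_7$. That case is fine: each in-neighbourhood is a cyclic triangle, so $\omega(QR_7)\ge3$.

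\textbf{The difficulty is in the upper bounds, not the lower bound.} $\omega\ge k+1$ is immediate once every in-neighbourhood contains a tournament with $\omega=k$: take the first vertex $w$ of any order and add it to a $k$-clique of the copy inside $N^-(w)$. The real work is forcing matching upper bounds and their drop after a deletion. The paper does this with no block structure, as follows.
\begin{itemize}
\item It starts from a separated template $(T,\varphi)$. All that is needed is $\omega(T)=k$, rational coordinates in $[0,k-1]$ with every backward span at least $1$, and an anchor clique at $0,\dots,k-1$; $T$ need not be critical.
\item It sets $N=kMs+1$ with $s$ even and $x_v=(1+\varphi(v))Ms$.
\item It puts into $D$ all $x_v$, all $x_v-x_u$ for backward pairs, and all $N-(x_v-x_u)$ for forward pairs. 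It completes $D$ using only elements $\ge Ms$; separation and $N\equiv1\pmod s$ rule out conflicts.
\end{itemize}
Then $w+X$ is a copy of $T$ inside $N^-(w)$. In the order $0<1<\dots<N-1$ every backedge has span at least $Ms$, so backedge cliques have at most $k+1$ vertices, and at most $k$ after deleting $0$. Intervals of $Ms$ consecutive vertices give $\chi(T_D)\le k+1$ and $\chi(T_D-0)\le k$. The coordinates $\varphi'(y)=y/(Ms)$ make $T_D$ a $(k+1)$-template. Hence the construction iterates from $C_3$ and covers every $N\equiv1\pmod{2kM}$.
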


For pairwise disjoint nonempty tournaments $A,B,C$, let the cyclic substitution $\Delta(A,B,C)$ be obtained by replacing the three vertices of a cyclic
triangle with $A,B,C$, respectively, with all arcs between the blocks
oriented as
\(
    A\to B\to C\to A.
\)
In fact, Aubian and Coulomb~\cite{aubian-coulomb} constructed infinitely many $3$-$\omega$-critical circulant tournaments $T$ and proved that $\Delta(T,T,T)$ is
$4$-$\omega$-critical.
Our second main theorem gives sufficient conditions for cyclic
substitution to produce $(k+1)$-$\omega$-critical tournaments.
In particular, an infinite family of tournaments that are
simultaneously $k$-$\omega$-critical and $k$-$\chi$-critical yields
an infinite family of $(k+1)$-$\omega$-critical tournaments.
Although the result is weaker than Theorem~\ref{thm:main}, the proof is short and cogent. 
Define
\(
    \Phi(x,y,z)=\max\{x,y,z,1+\min\{x,y,z\}\}.
\)

\begin{theorem}\label{thm:substitution-main}
Let $A,B,C$ be pairwise disjoint nonempty tournaments, and let
$R=\Delta(A,B,C)$.
\begin{enumerate}[label={\em (\roman*)}]
    \item We have
    \(
    \Phi\bigl(\omega(A),\omega(B),\omega(C)\bigr)
    \leq \omega(R)
    \leq
    \Phi\bigl(\chi(A),\chi(B),\chi(C)\bigr).
    \)
    Consequently, if $\omega(X)=\chi(X)$ for every
    $X\in\{A,B,C\}$, then
    \(
        \omega(R)=
        \Phi\bigl(\omega(A),\omega(B),\omega(C)\bigr).
    \)

    \item Let $a,b,c\geq 2$. Suppose that $A,B,C$ are, respectively,
    $a$-, $b$-, and $c$-$\chi$-critical and that
    \[
        \omega(A)=a,\qquad \omega(B)=b,\qquad \omega(C)=c.
    \]
    Then $R$ is $\omega$-critical if and only if $a=b=c$. If
    $a=b=c=k$, then $R$ is $(k+1)$-$\omega$-critical.
\end{enumerate}
\end{theorem}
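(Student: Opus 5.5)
The plan is to prove the two bounds in (i) separately, the upper bound by an explicit interleaved order, and then to derive (ii) from (i) by a short case analysis on $\Phi$.

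\emph{Lower bound in (i).} Fix any linear order $\sigma$ of $V(R)$. For each block $X\in\{A,B,C\}$, the backedge graph $B_\sigma(R)$ restricted to $X$ is $B_{\sigma|_X}(X)$. Hence $\omega(B_\sigma(R))\ge \omega(X)$ for each $X$, which gives the terms $\omega(A),\omega(B),\omega(C)$ of $\Phi$. For the term $1+\min$, let $x$ be the $\sigma$-first vertex of $R$, and by symmetry of the cyclic roles say $x\in A$. Every vertex of $C$ comes after $x$ and sends an arc to $x$ (since $C\to A$). So $x$ is adjacent in $B_\sigma(R)$ to all of $C$. A clique of $B_{\sigma|_C}(C)$ of size $\ge\omega(C)$ together with $x$ then gives a clique of size $1+\omega(C)\ge 1+\min\{\omega(A),\omega(B),\omega(C)\}$.

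\emph{Upper bound in (i).} Rotate the names so that $c=\chi(C)=\min\{\chi(A),\chi(B),\chi(C)\}$, and write $a=\chi(A)$, $b=\chi(B)$. Partition each block into transitive classes $A_1,\dots,A_a$, $B_1,\dots,B_b$ and $C_1,\dots,C_c$, each ordered transitively. Then concatenate them in the interleaved order
\[
A_1,B_1,C_1,\;A_2,B_2,C_2,\;\dots,
\]
skipping classes that do not exist. Each class is independent in the backedge graph, so a clique meets each class at most once and is described by its sets of indices $I,J,L$ in $A,B,C$.

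Between blocks, the only backedges are as follows: $A_i$--$B_j$ requires $i>j$, $B_j$--$C_l$ requires $j>l$, and $C_l$--$A_i$ requires $l\ge i$. A clique meeting all three blocks would force $i>j>l\ge i$, which is impossible. So a clique lives in at most two blocks:
\begin{itemize}[label={}]
\item On $A\cup B$, all $A$-indices exceed all $B$-indices and lie in $[1,a]$, so the clique has size at most $a$.
\item Similarly, on $B\cup C$ the clique has size at most $b$.
\item On $C\cup A$, all $A$-indices are at most $m=\min L$, so the clique has size at most $m+(c-m+1)=c+1$.
\end{itemize}
Together with cliques inside a single block, this gives $\omega\le\max\{a,b,c+1\}=\Phi(\chi(A),\chi(B),\chi(C))$. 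The equality statement follows by combining the two bounds. The index bookkeeping in this step, and choosing the rotation so that the ``$\ge$'' pair involves the minimal block, is the main point to get right.

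\emph{Part (ii).} By (i), $\omega(R)=\Phi(a,b,c)$. For $v\in A$ we have $R-v=\Delta(A-v,B,C)$, with $\chi(A-v)=a-1$ by $\chi$-criticality, and $\omega(A-v)\ge a-1$ since deleting a vertex lowers $\omega$ by at most one. Also $a-1\ge1$, so $A-v$ is nonempty. Therefore (i) sandwiches
\[
\omega(R-v)=\Phi(a-1,b,c),
\]
and similarly for $v\in B$ or $v\in C$. Thus $R$ is $\omega$-critical if and only if lowering any one coordinate strictly lowers $\Phi$.

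If $a=b=c=k$, then $\Phi(k,k,k)=k+1$ while $\Phi(k-1,k,k)=k$, so $R$ is $(k+1)$-$\omega$-critical. Conversely, suppose the values are not all equal, with maximum $M$, and say $a=\min<M$. Then $\Phi(a,b,c)=\max\{M,a+1\}=M$, and $\Phi(a-1,b,c)=\max\{M,a\}=M$. So deleting a vertex of $A$ does not lower $\omega$, and $R$ is not $\omega$-critical.
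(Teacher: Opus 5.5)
Your proof is correct and follows the paper's. The upper bound uses the same interleaved order of transitive classes, identical to the paper's after a cyclic relabeling, and (ii) is derived from (i) in the same way. The one minor difference is the lower bound: you argue directly from the $\sigma$-first vertex, whereas the paper specializes its general substitution bound (Proposition~\ref{prop:substitution-lower}) to $C_3$. Your argument is shorter for this case, while the paper's lemma applies to arbitrary outer tournaments.
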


Without the hypothesis $\omega(X)=\chi(X)$ for every block $X$,
the two bounds can be arbitrarily far apart.
For example, take $A,B,C$ to be disjoint copies of a tournament
$T$ with $\omega(T)=2$ and $\chi(T)\geq q$.
Then the lower expression is $3$, whereas the upper expression
is at least $q+1$.

The remainder of the paper is organized as follows.
Section~\ref{sec:preliminaries} introduces the notation and
templates and proves Theorem~\ref{thm:main}.
Section~\ref{sec:substitution} proves
Theorem~\ref{thm:substitution-main}.

\section{Notation and proof of Theorem~\ref{thm:main}}\label{sec:preliminaries}

Let $T$ be a tournament. For $v\in V(T)$, write
\(
    T-v=T\bigl[V(T)\setminus\{v\}\bigr].
\)
The \emph{in-neighborhood} and \emph{out-neighborhood} of a vertex
$v\in V(T)$ are, respectively,
\[
    N^-(v)=\{u\in V(T):u\to v\}
    \qquad\text{and}\qquad
    N^+(v)=\{u\in V(T):v\to u\}.
\]
A tournament is \emph{regular} if all its vertices have the same
outdegree, and \emph{vertex-transitive} if its automorphism group acts
transitively on its vertex set.

We shall repeatedly use the following elementary facts. If $H$ is a
subtournament of $T$, then
\(
    \omega(H)\leq \omega(T).
\)
Moreover, for every $v\in V(T)$,
\(
    \omega(T)-1\leq \omega(T-v)\leq \omega(T)
    \ \text{and}\ 
    \chi(T)-1\leq \chi(T-v)\leq \chi(T).
\)

\begin{definition}[$k$-template]\label{def:template}
Let $k\geq2$ be an integer. A \emph{$k$-template} is a pair
$(T,\varphi)$, where $T$ is a tournament with $\omega(T)=k$
and $\varphi:V(T)\to[0,k-1]\cap\mathbb Q$ is an injective map
satisfying the following conditions:
\begin{enumerate}[label=(T\arabic*)]
    \item if $\varphi(u)<\varphi(v)$ and $v\to u$, then
    $\varphi(v)-\varphi(u)\geq1$; and 
    \item there are vertices $a_0,\ldots,a_{k-1}$, called
    \emph{anchors}, such that $\varphi(a_i)=i$ for
    $0\leq i\leq k-1$ and $a_j\to a_i$ whenever
    $0\leq i<j\leq k-1$.
\end{enumerate}
\end{definition}

Since $\varphi$ is injective, it induces a linear order
$\sigma_\varphi$ on $V(T)$. Every backedge with respect to this order
has span at least $1$, and the anchors form a backedge clique of size
$k$. For vertices $u,v\in V(T)$ with
$\varphi(u)<\varphi(v)$, we call $(u,v)$ a \emph{backward pair} if
$v\to u$, and a \emph{forward pair} if $u\to v$. In either case, the
\emph{span} of $(u,v)$ is
\(
    \varphi(v)-\varphi(u).
\)
Thus, a backward pair corresponds to an arc directed backward in the
order $\sigma_\varphi$, whereas a forward pair corresponds to an arc
directed forward in this order.

\subsection{Proof of Theorem~\ref{thm:main}}

The cyclic triangle provides a basic example of a $2$-template. Let
$C_3$ have vertex set $\{a_0,b,a_1\}$ and arcs
\(
    a_0\to b,\ b\to a_1,\ a_1\to a_0.
\)
Define
\(
    \varphi(a_0)=0,\
    \varphi(b)=\frac12,\
    \varphi(a_1)=1.
\)
Then $\omega(C_3)=2$, and
\(
    \varphi\colon V(C_3)\longrightarrow [0,1]\cap\mathbb{Q}
\)
is injective. In the linear order induced by $\varphi$, namely
\(
    a_0\prec_{\sigma_\varphi} b\prec_{\sigma_\varphi} a_1,
\)
the only backward pair is $(a_0,a_1)$, corresponding to the arc
$a_1\to a_0$, and its span is
\(
    \varphi(a_1)-\varphi(a_0)=1.
\)
Moreover, $a_0$ and $a_1$ are anchors. Thus conditions
\textup{(T1)} and \textup{(T2)} are satisfied, and hence
$(C_3,\varphi)$ is a $2$-template.

We now assume that there is a $k$-template $(T,\varphi)$ for some $k\ge 2$ and prove that 
there is a positive integer $M$ such that the following holds for
every even integer $s\geq2$. Put $N=kMs+1$. There is a maximal
antipodal-free set $D\subseteq\mathbb Z_N\setminus\{0\}$ such that
the Cayley tournament $T_D=\operatorname{Cay}(\mathbb Z_N,D)$ is
regular, vertex-transitive, and both $(k+1)$-$\omega$-critical and
$(k+1)$-$\chi$-critical.
If $\varphi'(r)=r/(Ms)$ for $r\in\{0,\ldots,N-1\}$, then
$(T_D,\varphi')$ is a $(k+1)$-template. Consequently, Theorem~\ref{thm:main} holds.

Define
$$
\begin{aligned}
    \cP(T,\varphi)={}&\{1+\varphi(v):v\in V(T)\}\cup\{\varphi(v)-\varphi(u):(u,v)\text{ is a backward pair}\},
    \\
    \cQ(T,\varphi)={}&
    \{\varphi(v)-\varphi(u):(u,v)\text{ is a forward pair}\}.
\end{aligned}
$$

The template is \emph{separated} if
$\cP(T,\varphi)\cap\cQ(T,\varphi)=\varnothing$ and every backward
pair that is not an anchor--anchor pair has span strictly greater than
$1$. When the template is clear, we write simply $\cP$ and $\cQ$.

\begin{claim}
    We may assume that  $(T, \varphi)$ is a separated template. 
\end{claim}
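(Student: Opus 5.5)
We will show that any $k$-template $(T,\varphi)$ can be replaced by a separated $k$-template on the same tournament $T$, by perturbing $\varphi$ while keeping the anchors fixed. The set of admissible maps is an open-type region in $\mathbb Q^{V(T)}$. The separation conditions, by contrast, fail only on finitely many hyperplanes and on one closed boundary condition, so a generic small perturbation should work.

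\emph{First step.} Make every non-anchor--anchor backward span strictly greater than $1$. We stretch $\varphi$ away from the anchors as follows. Let $\varepsilon>0$ be small. Each non-anchor vertex $v$ lies in an open interval $(i,i+1)$ with $0\le i\le k-2$, since $\varphi$ is injective and the integers $0,\ldots,k-1$ are taken by the anchors. Alternatively, $v$ could lie at an endpoint only if it is an anchor, so the interval description covers all non-anchors. Define $\varphi_1(v)$ by an order-preserving piecewise-linear map that fixes the integers $0,\ldots,k-1$ and pushes points of each interval $(i,i+1)$ slightly toward its midpoint. This may not increase spans.

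A cleaner choice is the following. Note first that a backward pair $(u,v)$ with span exactly $1$ and not both anchors must have $\varphi(u),\varphi(v)$ both non-integers or exactly one integer. Replace $\varphi$ by $\psi(v)=\varphi(v)$ for anchors. For non-anchors with $\varphi(v)\in(i,i+1)$, set $\psi(v)=i+g(\varphi(v)-i)$, where $g\colon(0,1)\to(0,1)$ is strictly increasing and chosen to satisfy $g(t)<t$ on $(0,1)$ and $g(t)-g(s)$ behaving suitably. Checking that every pair with span $\ge1$ keeps span $\ge1$, and that pairs with span exactly $1$ become strictly longer, is the delicate part. Honestly, the simplest approach I would try first is a global dilation: set $\psi=\lambda\varphi$ with $\lambda>1$ on non-anchors relative to the nearest anchors. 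This, however, breaks $\varphi(a_i)=i$ and the range $[0,k-1]$. So I would instead do a tiny generic perturbation of non-anchors only, and verify (T1) via strict inequalities.

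\emph{Main obstacle.} Pairs that already have span exactly $1$, and which involve a non-anchor, sit on the boundary of (T1). A perturbation must push them outward rather than inward. I would handle this by an ordering argument. Consider the finite set of all values $\varphi(v)$, and move each non-anchor $v$ to $\varphi(v)+\delta\,h(v)$, where $h(v)$ depends only on the fractional part of $\varphi(v)$ and is decreasing in it. For a tight backward pair, $\varphi(v)=\varphi(u)+1$, so both points have the same fractional part. Hence $h(u)=h(v)$, and an extra anchor correction is needed. This suggests taking $h$ depending on $\varphi(v)$ itself, decreasing in $\varphi$ for the lower endpoint, which is impossible globally. So the real route is to shift non-anchors inside each unit interval toward its right end when they are lower endpoints of tight pairs, and argue there are no chains conflicting. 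I expect resolving such conflicts, for example via a potential function, to be the crux.

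\emph{Final step.} Once all non-anchor backward spans are strictly greater than $1$, the constraints (T1) are strict except for anchor pairs, which stay fixed. Perturb non-anchors generically by rationals with tiny $\delta$. Each coincidence $p=q$ with $p\in\cP$ and $q\in\cQ$ is a nontrivial linear equation in the perturbed coordinates, except when $p$ and $q$ are both anchor-determined, i.e. $p$ is $1+i$ or an anchor span $j-i$, and $q$ is an anchor forward span. Anchors have no forward pairs among themselves by (T2), so every coincidence is avoidable. Injectivity, the range, and (T2) survive, and $\omega(T)=k$ is unchanged.
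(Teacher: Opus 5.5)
Your proposal has a genuine gap at the step you yourself call the crux. You never construct a perturbation that makes the tight backward pairs (span exactly $1$, at least one endpoint a nonanchor) strictly longer while preserving (T1).

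You correctly observe that maps depending only on the fractional part cannot work, because both ends of a tight pair have the same fractional part. The route you then propose goes the wrong way. Shifting the lower endpoint $u$ of a tight pair $(u,v)$ to the right makes $\varphi(v)-\varphi(u)<1$ and violates (T1), unless $v$ moves even further. You also dismiss a shift that depends monotonically on $\varphi$ itself as ``impossible globally.'' That is mistaken: it is exactly the idea that works, and there are no conflicting chains to resolve. Without this step, your final generic-perturbation step has nothing to start from.

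The missing idea is the following. List the nonanchors $b_1,\dots,b_m$ in $\varphi$-order. Move $b_i$ to the right by $\varepsilon_i$, where $0<\varepsilon_1<\cdots<\varepsilon_m<\delta$ are rationals and $\delta$ is the minimum gap between $\varphi$-values. Keep the anchors fixed.

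This preserves the order, injectivity, the range and (T2). Every span between two nonanchors strictly increases, and so does every span of a pair $(a_i,v)$ with $v$ a nonanchor. The only spans that shrink belong to pairs $(u,a_j)$ with $u$ a nonanchor. For such a backward pair, (T1) and injectivity force $\varphi(u)<j-1$, hence $\varphi(u)\le j-1-\delta$. So its span stays strictly above $1$ after a shift smaller than $\delta$. The tight pairs only impose constraints of the form $\varepsilon(u)<\varepsilon(v)$ whenever $\varphi(u)<\varphi(v)$, so any strictly increasing choice satisfies all of them at once.

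The paper takes $\varepsilon_i=t^{m-i+1}$. Because the exponents are distinct powers of $t$, each unwanted equality between $\cP$ and $\cQ$ becomes a nonzero polynomial in $t$, so separation is obtained in the same stroke.

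Your separate generic step would also work once the first step is repaired, but the case analysis needs one more case. Some coincidences that involve nonanchors have a vanishing linear part. Examples are $1+\psi(v)$ against the forward span $\psi(v)-\psi(a_i)$, and a backward span $\psi(v)-\psi(a_i)$ against a forward span $\psi(v)-\psi(a_{i'})$. These reduce to $1+i=0$ or $i=i'$, both false, so they are harmless, but they are not ``nontrivial linear equations'' and should be listed separately.
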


\begin{proof}
To see this, list the nonanchors as
$b_1,\ldots,b_m$ so that
$\varphi(b_1)<\cdots<\varphi(b_m)$, and put
\[
    \delta=\min\{|\varphi(u)-\varphi(v)|:
    u,v\in V(T),\ u\neq v\}.
\]
Since the anchors have consecutive integer coordinates, we have
$0<\delta\leq1$. For a positive rational number $t<\delta$, define
$\varphi_t(a_i)=i$ and
$\varphi_t(b_i)=\varphi(b_i)+t^{m-i+1}$ for $1\leq i\leq m$.

If $i<j$, then $t^{m-i+1}<t^{m-j+1}$, and hence
\[
\varphi_t(b_j)-\varphi_t(b_i)
=\varphi(b_j)-\varphi(b_i)+t^{m-j+1}-t^{m-i+1}
>\varphi(b_j)-\varphi(b_i).
\]
Thus every span between two nonanchors increases. Also, every
nonanchor moves to the right by at most $t<\delta$. It follows that
$\varphi_t$ has range in $[0,k-1]$, is injective, and gives the same
linear order as $\varphi$. The tournament $T$ and the anchor
coordinates are unchanged, so $\omega(T)=k$ and (T2) still holds.

For (T1), every backward span increases or remains unchanged, except
possibly for a pair $(u,a_j)$ with $u$ a nonanchor. In this case,
$\varphi(u)<\varphi(a_{j-1})$. Since $u$ moves by less than $\delta$,
we still have $\varphi_t(u)<\varphi_t(a_{j-1})$, and hence
$\varphi_t(a_j)-\varphi_t(u)>1$.

It remains only to choose $t$ so that
$\cP(T,\varphi_t)\cap\cQ(T,\varphi_t)=\varnothing$.
For each unwanted equality, substituting the coordinates $\varphi_t$
gives a polynomial equation in $t$. Comparing the coefficients of the
distinct powers of $t$ and the constant terms shows that this polynomial
is nonzero. There are only finitely many unwanted equalities, so they
exclude only finitely many values of $t$.
We may therefore choose a rational $t\in(0,\delta)$ for which
$\cP\cap\cQ=\varnothing$. For this choice of $t$, all template
conditions remain valid, every backward pair involving a nonanchor has
span strictly greater than $1$, and $\cP\cap\cQ=\varnothing$. All
coordinates are rational, so $(T,\varphi_t)$ is a separated
$k$-template. Replacing $\varphi$ with $\varphi_t$, we may therefore
assume that $(T,\varphi)$ is separated.
\end{proof}

Choose a positive integer $M$ such that
$M\varphi(v)$ is an integer for every $v\in V(T)$. Fix an even integer
$s\geq2$, put $N=kMs+1$, and, for each $v\in V(T)$, set
\(
    x_v=(1+\varphi(v))Ms.
\)
The numbers $x_v$ are distinct integers divisible by $s$, and
$Ms\leq x_v\leq kMs=N-1$.

Define
\[
\begin{aligned}
    \cP'={}&\{x_v:v\in V(T)\}\cup\{x_v-x_u:\varphi(u)<\varphi(v)\text{ and }v\to u\},\\
    \cQ'={}&\{x_v-x_u:\varphi(u)<\varphi(v)\text{ and }u\to v\}.
\end{aligned}
\]
Then $\cP'=Ms\,\cP$ and $\cQ'=Ms\,\cQ$, so
$\cP'\cap\cQ'=\varnothing$.
The template conditions also give
$\min\cP'\geq Ms$ and $\max\cQ'\leq(k-1)Ms$. Every element of
$\cP'\cup\cQ'$ is divisible by $s$, while $N\equiv1\pmod{s}$.

We construct a maximal antipodal-free set $D$ as follows.
First put every element of
\[
    \cP'\cup(N-\cQ'),\qquad N-\cQ':=\{N-q:q\in\cQ'\},
\]
in $D$. From each remaining pair $\{d,N-d\}$, put in $D$ one member
that is at least $Ms$.

This definition never puts both members of a complementary pair in
$D$. Such a conflict among the prescribed elements would give
$p_1+p_2=N$, $q_1+q_2=N$, or $p=q$, for some
$p,p_1,p_2\in\cP'$ and $q,q_1,q_2\in\cQ'$. The first two equalities are
impossible modulo $s$, and the last is excluded by
$\cP'\cap\cQ'=\varnothing$.
For each pair not already prescribed, a member that is at least $Ms$ exists
because its two members sum to
$N=kMs+1\geq2Ms+1$.

Thus $D$ is a maximal antipodal-free set and defines the tournament
\(
    T_D=\operatorname{Cay}(\mathbb Z_N,D).
\)
Since its arcs depend only on differences, every translation of
$\mathbb Z_N$ is an automorphism. Hence $T_D$ is vertex-transitive
and regular, with every vertex having outdegree $|D|=(N-1)/2$.

Every element of $D$ is at least $Ms$. Indeed, the elements of
$\cP'$ are at least $Ms$, and for $q\in\cQ'$ we have
$N-q\geq Ms+1$. All elements chosen from the remaining pairs are
at least $Ms$ by definition.

Let $X=\{x_v:v\in V(T)\}$. Since $x_v\in\cP'\subseteq D=N^-(0)$
for every $v\in V(T)$, we have $X\subseteq N^-(0)$.
Let $u,v\in V(T)$ with $\varphi(u)<\varphi(v)$.
If $v\to u$ in $T$, then $x_v-x_u\in\cP'\subseteq D$,
so $x_v\to x_u$ in $T_D$. If $u\to v$ in $T$, then
$x_v-x_u\in\cQ'$, so $N-(x_v-x_u)\in D$ and
$x_u\to x_v$ in $T_D$.
Thus the map $v\mapsto x_v$ is an isomorphism from $T$ to $T_D[X]$.
For every $w\in\mathbb Z_N$, translation by $w$ shows that $w+X$
induces a copy of $T$ contained in the in-neighborhood $N^-(w)$ of
$w$.

Consider the order $0<1<\cdots<N-1$. If $i<j$ and $j\to i$,
then $j-i\in D$, so $j-i\geq Ms$. Therefore, if
$y_1<\cdots<y_r$ is a backedge clique, then
$y_r-y_1\geq(r-1)Ms$. Since all vertices lie between $0$ and $kMs$,
there is no backedge clique of size $k+2$, so
$\omega(T_D)\leq k+1$. After deleting $0$, the largest possible span
is $kMs-1$, so the same argument gives $\omega(T_D-0)\leq k$.
Translation gives $\omega(T_D-v)\leq k$ for every $v\in V(T_D)$.

For the lower bounds, take any order of $V(T_D)$ and let $w$ be its
first vertex. The in-neighborhood $N^-(w)$ contains the copy of $T$
induced by $w+X$. The order induced on this copy has a backedge clique
$K$ of size $k$. Every vertex of $K$ points to $w$, so
$K\cup\{w\}$ is a backedge clique of size $k+1$. Thus
$\omega(T_D)\geq k+1$. The same copy of $T$ is a subtournament of
$T_D-w$, so $\omega(T_D-w)\geq k$.
Therefore, $T_D$ is $(k+1)$-$\omega$-critical.

The same order also gives the dichromatic upper bounds. Each of
\[
    \{iMs,iMs+1,\ldots,(i+1)Ms-1\},
    \qquad 0\leq i\leq k-1,
\]
and the singleton $\{kMs\}$ is transitive: every positive difference
inside one of the intervals is smaller than $Ms$, so it is not in
$D$. Thus the earlier vertex points to the later one in this order.
Hence $\chi(T_D)\leq k+1$. Since $\omega(T_D)=k+1$, equality holds.
After deleting $0$, the $k$ intervals
\[
    \{iMs+1,iMs+2,\ldots,(i+1)Ms\},
    \qquad 0\leq i\leq k-1,
\]
are transitive and partition the remaining vertices, so
$\chi(T_D-0)\leq k$. Since $\omega(T_D-0)=k$ and $\omega\leq\chi$,
it follows that $\chi(T_D-0)=k$. Translation gives
$\chi(T_D-v)=k$ for every $v\in V(T_D)$.

Finally, define $\varphi'(r)=r/(Ms)$ for $0\leq r\leq N-1$. This map
is injective, rational-valued, and has range in $[0,k]$. Every backedge
in its order has span at least $1$. The vertices
$0,Ms,\ldots,kMs$ are anchors. For $0\leq i\leq k-1$, we have
\(
    x_{a_i}=(1+\varphi(a_i))Ms=(i+1)Ms\in\cP'\subseteq D.
\)
Thus $hMs\in D$ for every $1\leq h\leq k$, and so
$jMs\to iMs$ whenever $0\leq i<j\leq k$. It follows that
$(T_D,\varphi')$ is a $(k+1)$-template, which completes the proof of Theorem~\ref{thm:main}.

\section{Proof of Theorem~\ref{thm:substitution-main}}
\label{sec:substitution}
We next study the clique number of substitutions.

Let $T$ be a tournament, and let
$\mathcal{S}=(S_v)_{v\in V(T)}$ be a family of pairwise
vertex-disjoint nonempty tournaments. The \emph{substitution}
$(T,\mathcal{S})$ is obtained by replacing each vertex $v$ of $T$
with $S_v$, with all arcs directed from $S_u$ to $S_v$ whenever
$u\to v$ in $T$. We call $T$ the \emph{outer tournament} and the
tournaments $S_v$ the \emph{blocks}. Put $w(v)=\omega(S_v)$ for every
$v\in V(T)$.

\begin{proposition}\label{prop:substitution-lower}
For every substitution $(T,\mathcal{S})$,
\[
\omega((T,\mathcal{S}))
\geq
\max_{\varnothing\neq X\subseteq V(T)}
\left(\omega(T[X])+\min_{v\in X}w(v)-1\right).
\]
\end{proposition}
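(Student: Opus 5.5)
Fix a nonempty $X\subseteq V(T)$ and put $m=\min_{v\in X}w(v)$, $r=\omega(T[X])$. Since the subtournament of $(T,\mathcal S)$ induced by $\bigcup_{v\in X}V(S_v)$ is exactly the substitution $(T[X],(S_v)_{v\in X})$, and $\omega$ is monotone under taking subtournaments, it suffices to prove $\omega((T,\mathcal S))\geq r+m-1$ when $X=V(T)$. So I will fix an arbitrary linear order $\sigma$ of $V((T,\mathcal S))$ and exhibit a backedge clique of size $r+m-1$ in $B_\sigma$.

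The plan is to convert $\sigma$ into an order of the outer tournament by choosing one representative per block. For each $v\in V(T)$, the restriction of $\sigma$ to $V(S_v)$ is an order of $S_v$, so $B_\sigma$ restricted to $S_v$ contains a clique $K_v$ of size at least $w(v)\geq m$. List $K_v$ in $\sigma$-order and let $p_v$ be its $m$-th vertex (the last of its first $m$ vertices). Order $V(T)$ by the $\sigma$-positions of the $p_v$; this is an order $\tau$ of $V(T)$, so $B_\tau(T)$ contains a clique $u_1\prec_\tau\cdots\prec_\tau u_r$, meaning $u_j\to u_i$ in $T$ for $i<j$.

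Now assemble the clique: take the first $m$ vertices of $K_{u_1}$ together with $p_{u_2},\dots,p_{u_r}$. Within $K_{u_1}$ the first $m$ vertices are pairwise backedges. For $i<j$ with $i,j\geq 2$, $p_{u_i}\prec_\sigma p_{u_j}$ and all of $S_{u_j}$ points to $S_{u_i}$, so $p_{u_j}p_{u_i}$ is a backedge. For a vertex $y$ among the first $m$ of $K_{u_1}$ and $j\geq 2$, we have $y\preceq_\sigma p_{u_1}\prec_\sigma p_{u_j}$ and $p_{u_j}\to y$, so again a backedge. This gives a clique of size $m+r-1$, and since $\sigma$ was arbitrary the bound follows.

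The only delicate point is the choice of representative: it must be the $m$-th (not first or last) vertex of $K_v$, so that the smallest block in $\tau$-order contributes $m$ vertices all lying $\sigma$-before every later representative; checking this ordering inequality is the crux, and everything else is routine.
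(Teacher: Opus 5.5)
Your proof is correct and is essentially the paper's argument: choose one representative per block, take a backedge clique of the outer tournament among the representatives, and replace one extreme member of it by a backedge clique of its block. The paper does the mirror image, using the first vertex of each $K_v$ as representative and expanding the last member $r_z$ of the outer clique. Your closing remark that neither the first nor the last vertex of $K_v$ can serve as representative is therefore overstated: the first vertex works when you expand the last member, as in the paper, and the last vertex of $K_v$ works equally well in your own scheme.
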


\begin{proof}
Fix a nonempty set $X\subseteq V(T)$ and take any linear order
$\sigma$ of $(T,\mathcal{S})$. For each $v\in X$, choose a backedge
clique $K_v$ of size $w(v)$ in $S_v$, and let $r_v$ be its first
vertex. The representatives $r_v$ with $v\in X$ induce a copy of
$T[X]$, so some $\omega(T[X])$ of them form a backedge clique $C$.
Let $r_z$ be the last vertex of $C$.

Every vertex of $K_z$ comes after $r_z$. Moreover, if
$r_v\in C\setminus\{r_z\}$, then $r_vr_z$ is a backedge, so $z\to v$
in $T$. Hence every vertex of $K_z$ points to $r_v$. It follows that
$K_z\cup(C\setminus\{r_z\})$ is a backedge clique of size
$w(z)+\omega(T[X])-1$, which is at least
$\min_{v\in X}w(v)+\omega(T[X])-1$. Since $\sigma$ and $X$ were
arbitrary, the result follows.
\end{proof}

\subsection{Proof of Theorem~\ref{thm:substitution-main}}

For (i), apply Proposition~\ref{prop:substitution-lower} with outer tournament
$C_3$. A singleton $X$ contributes the clique number of its block, a
two-vertex set contributes the smaller of the two block
parameters, and $X=V(C_3)$ contributes
$1+\min\{\omega(A),\omega(B),\omega(C)\}$. This proves the lower bound.

For the upper bound, put
$a=\chi(A)$, $b=\chi(B)$, and $c=\chi(C)$. Relabel the blocks cyclically
if needed, so that $a\leq b$ and $a\leq c$. Partition the three blocks
into transitive classes
$A_1,\ldots,A_a$, $B_1,\ldots,B_b$, and $C_1,\ldots,C_c$.
Order the classes as
$B_1,C_1,A_1,B_2,C_2,A_2,\ldots$, omitting nonexistent classes, and
use a transitive order within each class. A backedge clique contains at
most one vertex from each class.

A vertex of $A_i$ and a vertex of $B_j$ form a backedge exactly when
$j\leq i$. Thus, if a backedge clique meets both $A$ and $B$ and $t$ is
the least index of an $A$-class that it meets, then it meets at most
$t$ classes of $B$ and at most $a-t+1$ classes of $A$. Its size is
therefore at most $a+1$. A clique contained in $A$ has size at most $a$, and one contained in
$B$ has size at most $b$. Therefore, every clique in $A\cup B$ has
size at most $\max\{a+1,b\}$.

Similarly, a vertex of $B_i$ and a vertex of $C_j$ form a backedge
exactly when $j<i$. If $t$ is the least index of a $B$-class met by a
clique meeting both blocks, its size is at most
$(b-t+1)+(t-1)=b$. Thus every clique in $B\cup C$ has size at most
$\max\{b,c\}$. The same argument for $C_i$ and $A_j$, which form a
backedge exactly when $j<i$, shows that every clique in $C\cup A$ has
size at most $c$.

Finally, no backedge clique can meet all three blocks: choosing one
vertex from each would give a cyclic triangle all of whose arcs point
backwards in a linear order, which is impossible. Therefore, the
constructed order has backedge clique number at most
$\max\{a+1,b,c\}=\Phi(a,b,c)$, where the equality uses
$a=\min\{a,b,c\}$. This proves the upper bound. If all three blocks
satisfy $\omega=\chi$, the two bounds coincide.

For (ii), suppose that $A,B,C$ satisfy the stated criticality
assumptions. Let $v$ belong to a block $X$ with $\omega(X)=\chi(X)=q$.
Since deleting one vertex decreases the clique number by at most
one and $\omega\leq\chi$, we have
$$
q-1=\omega(X)-1\leq\omega(X-v)\leq\chi(X-v)=q-1.
$$
Thus deleting a vertex reduces both parameters of its block from $q$ to
$q-1$, and part~(i) computes the clique
number before and after every vertex deletion.

If $a=b=c=k$, then $\Phi(k,k,k)=k+1$ and
$\Phi(k-1,k,k)=k$, with the same calculation for the other two blocks.
Thus $\Delta(A,B,C)$ is $(k+1)$-$\omega$-critical.

Now suppose that $a,b,c$ are not all equal, and put
$m=\min\{a,b,c\}$ and $M=\max\{a,b,c\}$. Then $M\geq m+1$, so
$\Phi(a,b,c)=M$. Deleting a vertex from a block of clique number $m$
does not change the maximum value $M$, and hence does not change the
value of $\Phi$. Therefore $\Delta(A,B,C)$ is not
$\omega$-critical, which completes the proof of Theorem~\ref{thm:substitution-main}.

\section*{Acknowledgments}

The authors are grateful to Pierre Aboulker and Samuel Coulomb for
their helpful correspondence and generosity in sharing information
about these results.


\begin{thebibliography}{99}

\bibitem{aboulker}
P. Aboulker, G. Aubian, P. Charbit, and R. Lopes,
\emph{Clique number of tournaments},
preprint, arXiv:2310.04265v2 [math.CO] (2026).

\bibitem{araujo-olsen}
G. Araujo-Pardo and M. Olsen,
\emph{A conjecture of Neumann-Lara on infinite families of
$r$-dichromatic circulant tournaments},
Discrete Mathematics 310 (2010), 489--492.

\bibitem{crew}
L. Crew, X. Fan, H. Koerts, B. Moore, and S. Spirkl,
\emph{Characterizing large clique number in tournaments},
preprint, arXiv:2602.09863v1 [math.CO] (2026).

\bibitem{kim}
I. Kim,
\emph{On Containment Relations in Directed Graphs},
Ph.D. thesis, Princeton University, 2013.

\bibitem{neumann-urrutia}
V. Neumann-Lara and J. Urrutia,
\emph{Vertex critical $r$-dichromatic tournaments},
Discrete Mathematics 49 (1984), 83--87.

\bibitem{neumann1997}
V. Neumann-Lara,
\emph{Vertex critical 4-dichromatic circulant tournaments},
Discrete Mathematics 170 (1997), 289--291.

\bibitem{neumann2000}
V. Neumann-Lara,
\emph{Dichromatic number, circulant tournaments and Zykov sums of digraphs},
Discussiones Mathematicae Graph Theory 20 (2000), 197--207.

\bibitem{nguyen-scott-seymour}
T. Nguyen, A. Scott, and P. Seymour,
\emph{Some results and problems on tournament structure},
Journal of Combinatorial Theory, Series B 173 (2025), 146--183.

\bibitem{aubian-coulomb}
G. Aubian and S. Coulomb,
\emph{Clique Number of Tournament II},
unpublished manuscript.

\bibitem{lelarge-k5}
M. Lelarge,
\emph{An infinite family of $5$-$\bar{\omega}$-critical tournaments},
unpublished manuscript,
\url{https://github.com/LLM4Rocq/digraph-theory/blob/main/paper/k5_theorem.pdf}.
\end{thebibliography}
\end{document}